\newtheorem{theorem}{Theorem}[section]
\newtheorem{lemma}[theorem]{Lemma}
\newtheorem{claim}[theorem]{Claim}
\newtheorem{prop}[theorem]{Proposition}
\newtheorem{cor}[theorem]{Corollary}
\theoremstyle{definition}
\newtheorem{definition}[theorem]{Definition}
\theoremstyle{remark}
\newtheorem{remark}[theorem]{Remark}
\numberwithin{equation}{section}
\begin{document}

\title[An upper bound on the number of zeros of a spline]{An upper bound on the number of zeros of a piecewise polinomial function}
\author{Marco B. Caminati}
\address{Dipartimento di Matematica "Guido Castelnuovo" \\ Sapienza - Universit\`a di Roma}
\curraddr{}
\email{caminati@mat.uniroma1.it}
\urladdr{http://www.mat.uniroma1.it/$\sim$caminati/}
\thanks{}

\subjclass[2000]{Primary 41A15, Secondary 65D07}
\keywords{spline, B-spline, approximation, knot, zero}
	
\date{2008/03/16}

\dedicatory{}

\begin{abstract}
A precise tie between a univariate spline's knots and its zeros abundance and dissemination is formulated. \\
As an application, a conjecture formulated by De Concini and Procesi is shown to be true in the special univariate, unimodular case. \\
As a supplement, the same conjecture is shown, through computing a counterexample, to be false when unimodularity hypothesis is dropped.
\end{abstract}

\maketitle
\section{Introduction}
A piecewise polynomial function, or univariate spline, here just \emph{spline}, of degree $m$ is a function $s \in C^{m-1}(\mathbb{R},\mathbb{R})$  which admits a partition of the real axis into a finite or numerable family of bound intervals such that on each of them $s$ is the restriction of a polynomial of maximum degree $m$. The bounds of the intervals are called \emph{knots}. More strongly, the knots of $s$ are the points in which $s$ is not $C^{\infty}$, and each interval between consecutive knots is a \emph{polynomiality domain}.
If the set of knots coincides with $\mathbb{Z}$ one speaks of cardinal splines, a variation handy for theoretical and formal elaborations. We refer the reader to \cite{Schoenberg} for a more extensive treatment of splines. 
\\
One could think of $s$ as the adjoining of a finite or numerable set of polynomials done so as to keep up the best possible smoothness over the junction points we just defined as knots. This endeavour must bring some costraints on the choice of polynomials, and particularly on the zeros of $s$. Our goal is to elaborate on that, and draw an adaptation of the fundamental theorem of algebra which fully takes into account the cited constraints.\\
Firstly, we will restrict ourselves to the finite knots case. There's obviously no loss of generality in doing that, since one can always think a finite knots spline as the restriction on a bound interval of a numerably-knotted spline: what happens outside the outermost pair of nodes is of no interest to us at the moment. Once done that, one could try to apply the fundamental theorem of algebra to each of the polynomiality domain to get a first gross estimation of the upper bound over the number of zeros of $s$, which is our goal. We must hold a moment and realize a couple of thing before doing that: one, that the constraints we invoked will allow a much better approach to that, and two, that there can be polynomiality domains on which $s$ is identically zero, a case which in the case of polynomials does not pose much problem, and which here must be taken care of.
\\
So, we will specialize the definition of zeros to the one of \emph{separated zeros} (see \ref{ZeriSeparati}), which is necessary to work around the annoying event of a piecewise constant spline. Such a case for $m=1$, suitable to depiction (for the knots are identifiable at a glance with the ``spikes'') is portrayed in figure \ref{fig_zeriseparati}. 
\begin{figure}
\psscaleboxto(\textwidth,0)
{
\input{./fig01.tex}
}
\caption[]{}
\label{fig_zeriseparati}
\end{figure}
In its right part the result of inserting a flat zone between two adjacent polynomiality domains of its left part is shown. For our purpose of counting the zeros this insertion is not to be taken into account, and that's the motivation for the new definition.

\section{Main results}
\begin{definition}
\label{ZeriSeparati}
\mbox{ } 
\\
\begin{itemize}
\item
$a,b \in \mathbb{R}$ are said to be \emph{$f$-separated} iff $f$ is a function defined and non-constant on $\left[a,b\right]$, i.e. 
\begin{equation*}
\exists c \in \left[a,b\right]  /  f(c)\neq f(a)
\end{equation*}
\item
Broadening the above definition: a countable subset of $\mathbb{R}$  $\left\{ \ldots < a_{i}<
a_{i+1} < \ldots \right\}_{i \in \mathbb{Z}}$ is said to be \emph{$f$-separated} iff
\begin{gather*}
\forall i \in \mathbb{Z}, a_{i} \text{ and } a_{i+1} \text{ are $f$-separated.}      
\\
\Updownarrow
\\ 
\forall i \neq j \in \mathbb{Z}, a_{i} \text{ and } a_{j} \text{ are $f$-separated.}
\end{gather*} 
\end{itemize}
\end{definition}
When there will be no possibility of confusion we shall dispense ourselves from writing the prefix $f$\nolinebreak-. in front of ``separated''.
\begin{remark}
\label{tantoalchilo}
We can now draw our first gross estimation of an upper bound on the number of \emph{separated} zeros of $s$ as already sketched in the introduction.
It's plainly given by $m(N+1)$, where $N$ is the number of knots, and is obtained by applying the fundamental theorem of algebra to each polynomiality domain and taking advantage of the definition of separated zeros.
\end{remark}
This remark assures that the following definition is correct.
\begin{definition}
$Z(s) \in \mathbb{N}$ is the number of zeros of the finite-knotted spline $s$.
\end{definition}
\ref{fondamentale} can be viewed as the basic result of this paper from the technical point of view, expressing the ties we were seeking between the number of knots, the degree and the number of separated zeros of a spline having compact support. It's preceded by a simple analytic lemma.

\begin{lemma}[strong Rolle]
\label{Rolle}
Hypothesis:
\\
\begin{enumerate}
\item
$f$ is derivable in $\left[a,b\right]$ , $(a<b)$
\item
$f(a)=f(b)$
\item
$a, b$ are $f$-separated
\end{enumerate}
Thesis:
\\
$\exists z \in ]a,b[$ such that:
\begin{enumerate}
\item
${f}^\prime(z)=0$
\item
$a,z,b$ are $f'$-separated
\end{enumerate}
\end{lemma}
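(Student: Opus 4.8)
The plan is to locate $z$ not as an arbitrary Rolle point but as a \emph{global extremum} of $f$ on $[a,b]$, which automatically carries the extra non-constancy information demanded by the second part of the thesis.

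First I would exploit hypothesis (3): since $a,b$ are $f$-separated there is $c\in[a,b]$ with $f(c)\neq f(a)$. Replacing $f$ by $-f$ if necessary --- an operation that fixes the zero set of $f'$ and preserves $f'$-separation, since $(-f)'=-f'$ is non-constant on an interval exactly when $f'$ is --- I may assume $f(c)>f(a)$. As $f$ is derivable, hence continuous, on the compact interval $[a,b]$, it attains a maximum value $M=\max_{[a,b]}f$, and $M\ge f(c)>f(a)=f(b)$ by hypothesis (2). Consequently the maximum cannot be attained at either endpoint, so there exists an interior point $z\in{]a,b[}$ with $f(z)=M$. Fermat's theorem on interior extrema then yields $f'(z)=0$, which is the first conclusion.

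The heart of the argument is the second conclusion, that $a,z,b$ be $f'$-separated; by Definition \ref{ZeriSeparati} this amounts to showing that $f'$ is non-constant on each of $[a,z]$ and $[z,b]$. The key observation is that if $f'$ were constant on $[a,z]$, then --- since $z$ lies in that interval and $f'(z)=0$ --- the constant value would have to be $0$, so $f'\equiv 0$ on $[a,z]$ and therefore $f$ would be constant there; but then $f(a)=f(z)=M$, contradicting $f(a)<M$. Hence $f'$ is non-constant on $[a,z]$, i.e. $a$ and $z$ are $f'$-separated. The identical argument on $[z,b]$, using $f(b)=f(a)<M=f(z)$, shows $z$ and $b$ are $f'$-separated, and the two together give the claim.

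The only genuine subtlety I anticipate is the bookkeeping around the definition of separation rather than any analytic difficulty: I must ensure the chosen $z$ is \emph{strictly} interior (guaranteed by the strict inequality $M>f(a)=f(b)$, which in turn is the whole point of assuming separation rather than merely $f(a)=f(b)$), so that both $[a,z]$ and $[z,b]$ are non-degenerate and the notion of $f'$-separation on them is meaningful. The implication ``$f'$ constant $\Rightarrow$ $f'\equiv 0$ $\Rightarrow$ $f$ constant'' rests only on the mean value theorem, so no delicate estimate is needed; ordinary Rolle would not suffice here precisely because it gives no control over the behaviour of $f'$ away from $z$, which is exactly what the extremal choice of $z$ supplies.
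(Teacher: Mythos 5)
Your proof is correct and follows essentially the same route as the paper's: take $z$ to be an interior global extremum (using separation to force the extremal value strictly away from $f(a)=f(b)$), apply Fermat's theorem for $f'(z)=0$, and then observe that constancy of $f'$ on $[a,z]$ or $[z,b]$ would force $f'\equiv 0$ there and hence $f(z)=f(a)$, a contradiction. Your explicit $f\mapsto -f$ reduction is just a formalization of the paper's ``we can suppose, for example'' step, so the two arguments coincide.
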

\begin{proof}
$f$, being continuous on $\left[a,b\right]$, admits maximum and minimum on $\left[a,b\right]$. 
Employing the separation hypothesis we can suppose, for example, the existence of
$z\in \left[a,b\right]/f \left(z \right)= \underset{ \left[ a,b \right] }{max}f>f \left( a \right) =f \left( b \right) $.
\\
Being $f$ derivable, $f ^\prime(z)=0$.

Moreover, $f ^\prime$ can't be constant on $\left[a,z\right]$, otherwise $f ^\prime$ would be there identically zero and $f \left(z \right)=f \left(a \right)$ against what just stated. Then $a$ and $z$ are $f^ \prime$-separated. Similarly so are $z$ and $b$. 
\end{proof}

Now our informal idea is to apply repeated derivation to lower the degree of the spline, and employ \ref{Rolle} to control the number of separated zeros thus elicited. We thus end up with a spline of degree one (a polygonal path), for which the connection between zeros and knots number is trivial and is formally expressed in \ref{sega}.
\\
At first we proceed with our idea on a particular kind of spline, having as outermost knots a couple of zeros of maximum order, which gives them the quality of being conserved through derivation. This is a quality because simplifies our reasoning and permits to deduce our first result.
This special kind of splines, subject of \ref{fondamentale}, encompasses, notably, the ones with compact support, but it's wider, containing all the splines which can be smoothly flattened outside a compact interval.
\begin{prop}
\label{fondamentale}
Be given a spline $s:\mathbb{R} \longrightarrow \mathbb{R}$ of degree $m$, being
$\left\{ \alpha_{0}, \ldots , \alpha_{n} \right\}$
its ordered knots ($m,n \in \mathbb{Z}^{+}$). 
\\
What's more, suppose that
$\alpha_{0}$ and $\alpha_{n}$ are \emph{separated zeros of order\footnote{
Here we mean order of a zero $z$ according the following analytic definition, where $f \in C^{m-1}$:
\begin{equation*}
\text{ord}(z,f):=min\left( \{j \in \{0, \ldots ,m-1 \} / f^{\left( j \right)}\left(z\right)\neq 0 \} \cup \{m\}\right)
\end{equation*}
Obviously this reduces to the algebraic definition given in terms of multiplicity of a zero in case $f$ is a polynomial.
}
$\geq m$}\footnote{This implies the order is either $m$ or $+ \infty$}

Then
\begin{itemize}
\item
$n \geq m+1$
\item
$s$ has at most $n-m-1$ separated zeros in $\left] \alpha_{0}, \alpha_{n} \right[$.
\end{itemize}
\end{prop}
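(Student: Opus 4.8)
The plan is to prove both assertions at once by differentiating $s$ a total of $m-1$ times, thereby reducing it to a polygonal path to which the degree-one result \ref{sega} applies, while using the strong Rolle lemma \ref{Rolle} at every differentiation to track how the separated zeros proliferate. Write $Z_j$ for the number of separated zeros of $s^{(j)}$ in $\left]\alpha_0,\alpha_n\right[$, so that $Z_0$ is the quantity I want to bound; the engine of the argument will be the estimate $Z_{j+1}\ge Z_j+1$.

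First I would record how the hypotheses behave under differentiation. Since a knot is by definition a point where the function fails to be $C^\infty$, and an antiderivative of a $C^\infty$ function is again $C^\infty$, differentiation cannot create knots; hence each $s^{(j)}$ is a spline of degree $m-j$ whose knots are among $\alpha_0,\ldots,\alpha_n$. From $\mathrm{ord}(\alpha_0,s)\ge m$ and $\mathrm{ord}(\alpha_n,s)\ge m$ one reads off $\bigl(s^{(j)}\bigr)^{(i)}(\alpha_0)=s^{(i+j)}(\alpha_0)=0$ for all $i\le m-1-j$, so $\alpha_0$ and $\alpha_n$ are zeros of order $\ge m-j$ of $s^{(j)}$; in particular $s^{(m-1)}(\alpha_0)=s^{(m-1)}(\alpha_n)=0$, so the endpoints stay zeros all the way down to the polygonal level.

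The core is a propagation step I would isolate as a claim: if $f$ is derivable on $[\alpha_0,\alpha_n]$, if $\alpha_0,\alpha_n$ are $f$-separated zeros of $f$, and if the interior separated zeros of $f$ are taken together with the endpoints to form one $f$-separated set $\alpha_0=z_0<z_1<\cdots<z_k<z_{k+1}=\alpha_n$, then $f'$ has at least $k+1$ separated zeros in $\left]\alpha_0,\alpha_n\right[$ and $\alpha_0,\alpha_n$ are $f'$-separated. Indeed, applying \ref{Rolle} to each interval $[z_i,z_{i+1}]$ (where $f(z_i)=f(z_{i+1})=0$ and $z_i,z_{i+1}$ are $f$-separated) produces $w_i\in\left]z_i,z_{i+1}\right[$ with $f'(w_i)=0$ and with $z_i,w_i,z_{i+1}$ being $f'$-separated; the resulting $w_0<\cdots<w_k$ are $k+1$ zeros of $f'$, and consecutive ones $w_i,w_{i+1}$ are $f'$-separated because $f'$ is already non-constant on $[w_i,z_{i+1}]\subseteq[w_i,w_{i+1}]$, the end segments giving likewise that $\alpha_0,\alpha_n$ are $f'$-separated. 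By the bookkeeping of the previous paragraph this claim applies at every level, so $Z_{j+1}\ge Z_j+1$ for $0\le j\le m-2$, whence $Z_{m-1}\ge Z_0+(m-1)$. Finally $s^{(m-1)}$ is a polygonal path with knots among $\alpha_0,\ldots,\alpha_n$ and with $\alpha_0,\alpha_n$ as separated zeros, so \ref{sega} gives $Z_{m-1}\le n-2$. This yields $Z_0\le n-m-1$, the second assertion, and from $m-1\le Z_{m-1}\le n-2$ the bound $n\ge m+1$, the first assertion.

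The step I expect to demand the most care is the bookkeeping of the \emph{separated} property as opposed to mere vanishing. I must be certain that the interior separated zeros counted by $Z_j$ may be taken together with $\alpha_0,\alpha_n$ as a single genuinely $f$-separated set, so that \ref{Rolle} is legitimately applicable on every subinterval, and conversely that the zeros manufactured by Rolle are again mutually separated so that they really count towards $Z_{j+1}$. This is precisely the information carried by the strengthened conclusion ``$a,z,b$ are $f'$-separated'' of \ref{Rolle}, so the difficulty is organizational rather than deep; the only other point to verify is that differentiation does not manufacture new knots, which follows at once from the $C^\infty$ characterization of knots.
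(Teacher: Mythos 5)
Your strategy is the paper's own proof with the induction on $m$ unrolled: one application of the strong Rolle lemma \ref{Rolle} per differentiation, ending at the polygonal case \ref{sega}, with the same bookkeeping about knots and about the orders of $\alpha_0,\alpha_n$. The gap sits exactly at the point you flagged and then waved away. Your claim is stated with the hypothesis that the interior separated zeros, \emph{together with} $\alpha_0,\alpha_n$, form one separated set, and you assert this hypothesis is available ``at every level'' because it is ``precisely the information carried by'' the strengthened conclusion of \ref{Rolle}. That is not so at the starting level $j=0$: no Rolle step has yet been performed there, and nothing in your first paragraph (knots do not proliferate, orders drop by one) implies that a maximal separated family of interior zeros of $s$ remains separated once $\alpha_0,\alpha_n$ are adjoined. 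Indeed this can fail. Take $s(x)=c(x-\alpha_0)^m$ for $x\le\alpha_0$, then $s\equiv 0$ on $\left[\alpha_0,\alpha_1\right]$, then on $\left[\alpha_1,\alpha_n\right]$ (with $n=m+2$) a minimal-support positive bump as in \ref{supportominimo}, and finally $s\equiv 0$ for $x\ge\alpha_n$. The hypotheses of \ref{fondamentale} hold, the interior zeros of $s$ are exactly the points of $\left]\alpha_0,\alpha_1\right]$, so every maximal separated family is a single such point --- and no such point is separated from $\alpha_0$, since $s$ vanishes identically between them. For this spline your claim is inapplicable at $j=0$, so $Z_1\ge Z_0+1$ is left unproved (it happens to be true, but because the flat domain itself supplies zeros of $s'$, not by Rolle on $\left[\alpha_0,z_1\right]$). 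The same defect recurs at the higher levels, since $Z_j$ is realized by an arbitrary maximal family, not necessarily the Rolle-produced one, and again at the bottom, where deducing $Z_{m-1}\le n-2$ from the count of $n$ in \ref{sega} once more presupposes that the endpoints can be adjoined.

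The repair is to do what the paper does: never detach the endpoints from the family. The paper's inductive statement concerns families of separated zeros that \emph{contain} $\alpha_0$ and $\alpha_n$; strong Rolle then guarantees that the family produced for $s'$ again contains the endpoints and is separated, so the induction closes without ever needing the augmentation step. Concretely, redefine $Z_j$ as the maximal cardinality of a set of zeros of $s^{(j)}$ in $\left]\alpha_0,\alpha_n\right[$ which is separated and stays separated after adjoining $\alpha_0$ and $\alpha_n$; with this definition the hypothesis of your claim holds by construction, your inequalities $Z_{j+1}\ge Z_j+1$ and $Z_{m-1}\le n-2$ go through verbatim, and the arithmetic yields both bullets. (Be aware that under this reading --- which is also the reading under which the paper's own argument is complete --- what is proved is a bound on zeros separated from each other \emph{and from the endpoints}; extracting the statement for arbitrary interior separated families, as in the flat-end example above, needs an extra argument, e.g.\ splitting off the flat end domains, a point the paper glosses over as well.)
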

\begin{proof}
It suffices to show that
\begin{eqnarray}
Z(s) \leq n-m+1&,
\end{eqnarray}
which immediately implies the second statement, but also the first, because 
 (cfr. \ref{tantoalchilo}) $Z$ is well defined and non negative, so in this case we get $Z(s) \geq 2$.
\\
By induction on $m$.
$m=1$ means that $s$ gives a polygonal path composed of $n$ line segments; this path can't, quite obviously,
admit more than $n$ separated zeros (by the way, that's true even dropping the last request on $\alpha_{0}$ and $\alpha_{n}$). For a formal proof of this last elementary assertion see \ref{sega}.
\\
Now let $m > 1$. Suppose we found $k+2$ separated zeros for $s$; we denote them the following way:
\begin{eqnarray}
\alpha_{0}=: z_{0} < z_{1} < \ldots < z_{k} < z_{k+1} := \alpha_{n}
\end{eqnarray}

Now we apply \ref{Rolle} (explicitly noting that we are entitled to do so, being $s$ derivable, even more being it in $C^{m-1}$) to each $\left[z_{i-1},z_{i} \right], i=1,2,\ldots,k+1$ and work out
\begin{eqnarray}
z_1^{\prime}<\ldots<z_k^{\prime}<z_{k+1}^{\prime} & \left(z^{\prime}_{i} \in \left] z_{i-1}, z_{i}\right[\right)
\end{eqnarray}
$k+1$ separated zeros for $s^{\prime}$.\\
Now, again, $s^{\prime}$ is a spline of degree $m-1$ and its knots are the knots of $s$
Moreover,  $\alpha_{0}$ and $\alpha_{n}$ are zeros of order $\geq m-1$ for $s^{\prime}$, we can conclude from the inductive hypothesis that
\begin{eqnarray}
k+1\leq n-(m-1) -1 \Leftrightarrow k \leq n-m-1
\end{eqnarray}
\end{proof}
The two statements proved in \ref{fondamentale} give respectively the following couple of corollaries.
\begin{cor}
\label{supportominimo}
The compact support of a spline of degree $m \geq 1$ contains at least $m+1$ consecutive polynomiality domains.
\\
Equivalently\footnote{This second formulation is suited for splines with countably many nodes as well.}: each connected component of the support of $s$ has ``length'' at least $m+1$ in term of polynomiality domains.
\end{cor}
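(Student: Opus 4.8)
The plan is to derive the statement from the first conclusion of \ref{fondamentale}, the inequality $n \geq m+1$, after recasting a connected piece of the support as the interval spanned by a suitable pair of extreme knots. Concretely, I would fix a connected component $C$ of the support of $s$. Since the support is the closure of the open set $\{x : s(x) \neq 0\}$ and $s$ is continuous, $C$ cannot reduce to a single point, so it is a nondegenerate interval, which I write $C = [\alpha_0, \alpha_n]$; I list the knots of $s$ lying in it as $\alpha_0 < \alpha_1 < \cdots < \alpha_n$. To have a spline to which the proposition literally applies, I would pass to the auxiliary function $\sigma$ equal to $s$ on $[\alpha_0, \alpha_n]$ and to $0$ elsewhere: because $\alpha_0$ and $\alpha_n$ are boundary points of a component of the support, $s$ vanishes on one-sided neighbourhoods of them, so $\sigma$ is again a spline of degree $m$, with compact support and with knots exactly $\alpha_0 < \cdots < \alpha_n$.

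Next I would check that $\alpha_0$ and $\alpha_n$ satisfy the hypotheses of \ref{fondamentale} for $\sigma$. They are knots: to the left of $\alpha_0$ the function is identically zero, so were $\alpha_0$ a point of smoothness, $\sigma$ would coincide near $\alpha_0$ with a single polynomial vanishing on a whole interval, hence vanishing identically, contradicting $\alpha_0 \in C$; the same holds at $\alpha_n$. They are zeros of order $\geq m$: since $\sigma \equiv 0$ on a left neighbourhood of $\alpha_0$, all its derivatives vanish identically there, and because $\sigma \in C^{m-1}$ the derivatives $\sigma^{(j)}$, $j = 0, \ldots, m-1$, are continuous at $\alpha_0$, whence $\sigma^{(j)}(\alpha_0) = 0$ for every $j \leq m-1$; by the analytic definition of order recalled in \ref{fondamentale} this gives $\mathrm{ord}(\alpha_0, \sigma) = m$, and likewise at $\alpha_n$. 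Finally they are separated: $C$ contains points where $\sigma \neq 0$ while $\sigma(\alpha_0) = \sigma(\alpha_n) = 0$, so $\sigma$ is non-constant on $[\alpha_0, \alpha_n]$, which is the separation condition of \ref{ZeriSeparati}.

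With the hypotheses verified, \ref{fondamentale} delivers $n \geq m+1$. It remains only to read this off in terms of polynomiality domains: the knots $\alpha_0 < \alpha_1 < \cdots < \alpha_n$ partition $[\alpha_0, \alpha_n]$ into the $n$ consecutive intervals $[\alpha_{i-1}, \alpha_i]$, $i = 1, \ldots, n$, each a polynomiality domain, so $C$ comprises at least $m+1$ of them; this is the second, sharper formulation, and the first follows a fortiori since the support contains $C$ and hence these $m+1$ consecutive domains. I expect no genuine obstacle, as all the quantitative content already resides in \ref{fondamentale}; the only point demanding care is the bookkeeping that turns ``vanishing on one side together with $C^{m-1}$ regularity'' into ``zero of order $\geq m$'', which rests on the continuity of the derivatives up to order $m-1$ and on a polynomial being forced to vanish everywhere once it vanishes on an interval.
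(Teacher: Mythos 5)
Your proof is correct and takes essentially the same route as the paper's: note that the knots delimiting a support component are mutually separated zeros of order $m$ (forced by one-sided vanishing together with $C^{m-1}$ regularity), extend by zero outside the component, and invoke the bound $n \geq m+1$ from \ref{fondamentale}. The only difference is cosmetic --- the paper proves the compact-support formulation first and derives the component statement by the same zeroing construction, whereas you handle the component case first and get the other a fortiori; your extra verifications (that the endpoints are knots and have order $\geq m$) are details the paper leaves implicit.
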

\begin{proof}
Just note that the nodes delimiting the support are forcibly also zeros of order $m$, and that they are obviously mutually separated.
\\
The second formulation is obtained by choosing a connected component of the support and smoothly zeroing the spline outside of it, then applying what just showed.
\end{proof}

\begin{cor}
A smallest-support spline (see \ref{supportominimo}) never attains zero value inside the support itself. In particular, it's either always strictly positive or negative there.
\end{cor}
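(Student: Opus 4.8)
The plan is to read off the numerical content of \ref{fondamentale} and then upgrade ``few separated zeros'' into ``no zeros at all''. A smallest-support spline is, by \ref{supportominimo}, one whose support consists of exactly $m+1$ consecutive polynomiality domains; writing its knots as $\alpha_0 < \cdots < \alpha_n$ in the notation of \ref{fondamentale}, this forces $n = m+1$. As observed in the proof of \ref{supportominimo}, the extreme knots $\alpha_0,\alpha_n$ are then separated zeros of order $m$, so \ref{fondamentale} applies and tells us that $s$ has at most $n-m-1 = 0$ separated zeros in $\left]\alpha_0,\alpha_n\right[$. The whole task is thus to deduce from ``no interior separated zero'' the stronger conclusion ``no interior zero''.

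First I would argue by contradiction: suppose $s(c)=0$ for some $c \in \left]\alpha_0,\alpha_n\right[$. The idea is that $\{\alpha_0,\,c,\,\alpha_n\}$ ought to be three mutually separated zeros, which, carrying one interior separated zero, would contradict the count above. By the equivalence built into \ref{ZeriSeparati} it is enough to check that $\alpha_0,c$ are $s$-separated and that $c,\alpha_n$ are $s$-separated.

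The main obstacle, and the heart of the argument, is precisely this separation check. Suppose $\alpha_0$ and $c$ were not $s$-separated; since $s(\alpha_0)=0$ this means $s \equiv 0$ on $[\alpha_0,c]$. But a polynomial vanishing on a subinterval of positive length of its polynomiality domain vanishes on the whole domain, so $s$ would be identically zero on the first polynomiality domain $\left]\alpha_0,\alpha_1\right[$ (and on any further domain entirely contained in $[\alpha_0,c]$). That domain would then contribute nothing to the support, leaving it with at most $m$ polynomiality domains and violating \ref{supportominimo}. Hence $\alpha_0,c$ are separated, and the symmetric argument at the right endpoint shows $c,\alpha_n$ are separated. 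This produces the forbidden interior separated zero and closes the contradiction, so $s$ never vanishes on $\left]\alpha_0,\alpha_n\right[$. Finally, being continuous and nowhere zero on the connected interval $\left]\alpha_0,\alpha_n\right[$, $s$ keeps a constant sign there by the intermediate value theorem, which is the ``in particular'' clause.
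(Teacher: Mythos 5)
Your proof is correct and follows exactly the route the paper intends: the paper states this corollary without proof, as an immediate consequence of the second bullet of \ref{fondamentale} (a smallest-support spline has $n=m+1$, and its endpoints are separated zeros of order $m$, so it admits at most $n-m-1=0$ interior separated zeros), which is precisely your count. The step the paper leaves tacit --- that any interior zero would automatically be separated from both endpoints, since non-separation would force $s$ to vanish on an entire polynomiality domain and shrink the support below the bound of \ref{supportominimo} --- is supplied correctly by your contradiction argument.
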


Now we wish to write the general spline as a linear combination of translations of compact-supported splines, so as to employ \ref{fondamentale} and desume the relations between $m,n$ and the number of knots we are looking for. In other words, given a generic spline with finitely many knots, we would like to extend it outside its outermost knots to a compact-supported spline, which is subject to \ref{fondamentale}.
To this end we introduce a simple technical lemma. 
\begin{lemma}
\label{estensione}
Given a spline $s$ of degree $m$ and knots $\alpha_{0}<\ldots<\alpha_{n}$, there is a compact-supported spline $\bar{s}$ extending $s|_{\left[\alpha_{0},\alpha_{n}\right]}$ and such that the set of its knots is included in
\begin{align*}
\left\{
\alpha_{0}-m,\alpha_{0}-m+1,\ldots,\alpha_{0},\ldots,\alpha_{n},\alpha_{n}+1,\ldots,\alpha_{n}+m
\right\}
\end{align*}
\end{lemma}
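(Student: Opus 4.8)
The plan is to build the extension separately on each side of $[\alpha_0,\alpha_n]$, the two constructions being mirror images of one another under $x\mapsto -x$, so I will only describe the extension to the right of $\alpha_n$; the left side is identical with the knots $\alpha_n+1,\dots,\alpha_n+m$ replaced by $\alpha_0-1,\dots,\alpha_0-m$. Since $s\in C^{m-1}$, the data to be matched at $\alpha_n$ is the $(m-1)$-jet $(s(\alpha_n),s'(\alpha_n),\dots,s^{(m-1)}(\alpha_n))$, read off from the rightmost polynomial piece of $s$. I want to produce a degree-$m$ spline $h$ on $[\alpha_n,\alpha_n+m]$, with interior knots only at the integer points $\alpha_n+1,\dots,\alpha_n+m-1$, whose $(m-1)$-jet at $\alpha_n$ equals that of $s$ and whose $(m-1)$-jet at $\alpha_n+m$ vanishes. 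Granting such an $h$, I define $\bar s$ to be $s$ on $[\alpha_0,\alpha_n]$, then $h$ on $[\alpha_n,\alpha_n+m]$, then $0$ on $[\alpha_n+m,+\infty)$ (and symmetrically on the left): matching of the jets forces $\bar s\in C^{m-1}$, its knots lie in the prescribed set, and it has compact support.

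The existence of $h$ I will phrase as a linear-algebra statement. Let $V$ be the space of degree-$m$ splines on $[\alpha_n,\alpha_n+m]$ whose interior knots are among $\alpha_n+1,\dots,\alpha_n+m-1$. Counting the $m+1$ coefficients of the polynomial on $[\alpha_n,\alpha_n+1]$ together with one jump parameter (the jump of the $m$-th derivative, equivalently a truncated-power coefficient) at each of the $m-1$ interior knots gives $\dim V=(m+1)+(m-1)=2m$. Consider the \emph{end-jet map} $E\colon V\to\mathbb{R}^{2m}$ sending $h$ to its two $(m-1)$-jets at $\alpha_n$ and at $\alpha_n+m$. The $h$ I need exists for the data (jet of $s$, $0$) as soon as $E$ is surjective, and since source and target have the same dimension it suffices to prove that $E$ is injective.

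The injectivity is the one real point, and here I would lean on Corollary \ref{supportominimo}. Suppose $h\in V$ satisfies $E(h)=0$, i.e.\ both its $(m-1)$-jets vanish. Extend $h$ by zero outside $[\alpha_n,\alpha_n+m]$ to a function $\tilde h$ on all of $\mathbb{R}$; the vanishing of both end-jets is exactly what makes the junctions at $\alpha_n$ and $\alpha_n+m$ of class $C^{m-1}$, so $\tilde h$ is a compactly supported spline of degree $m$ whose knots lie among $\alpha_n,\alpha_n+1,\dots,\alpha_n+m$. Its support is contained in $[\alpha_n,\alpha_n+m]$, which is subdivided into at most $m$ polynomiality domains, whereas Corollary \ref{supportominimo} demands that every connected component of the support of a \emph{nonzero} such spline span at least $m+1$ of them. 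Hence $\tilde h\equiv 0$, so $h\equiv 0$, proving $E$ injective and therefore bijective. This produces the required $h$, and gluing as above, together with the mirror construction on the left, yields $\bar s$.

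I expect the only genuine obstacle to be the solvability of this two-point jet-interpolation problem, i.e.\ the injectivity of $E$; the remark above shows it is not an independent computation but a direct consequence of the minimal-support corollary already established, while the dimension count and the $C^{m-1}$ gluing are routine.
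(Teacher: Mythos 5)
Your proof is correct, but it takes a genuinely different route from the paper's. The paper constructs the extension explicitly out of B-splines: assuming $\alpha_0=0$, it sets $s^-=\sum_{j=-m}^{0}\lambda_j T_j B_m$ and invokes the fact, imported from \cite{Schoenberg}, that the integer translates of $B_m$ form a basis of the space of cardinal splines, so the coefficients $\lambda_j$ can be chosen to make $s^-$ agree with $s$ on $\left[0,\min\{\alpha_1,1\}\right]$; gluing $s^-$ to $s$ then gives the one-sided extension, the knot bound coming from the support $\left[0,m+1\right]$ of $B_m$. You instead solve a two-point jet-interpolation problem by pure linear algebra: the space $V$ of degree-$m$ splines on $\left[\alpha_n,\alpha_n+m\right]$ with the $m-1$ prescribed interior knots has dimension $2m$; the end-jet map $E\colon V\to\mathbb{R}^{2m}$ is injective, because an element of its kernel extends by zero to a compactly supported spline whose support would span at most $m$ polynomiality domains, contradicting \ref{supportominimo}; hence $E$ is bijective and the required one-sided extension exists. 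Your appeal to \ref{supportominimo} involves no circularity, since that corollary is deduced from \ref{fondamentale} before \ref{estensione} and independently of it. What the paper's route buys is brevity and an explicit formula for $\bar s$, at the price of importing Schoenberg's basis theorem as a black box; what your route buys is self-containedness --- the extension lemma becomes a consequence of the paper's own minimal-support corollary --- plus a little extra information, namely that the prescribed jets at the two endpoints determine the extending piece \emph{uniquely} within $V$.
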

\begin{proof}
We need to introduce $B_{m}$, the $B$-spline of degree $m$\footnote{To avoid confusion the reader is warned that in \cite{Schoenberg} $M_{m}$ denotes the B-spline of degree $m-1$}. It's a particularly well-behaved cardinal spline supported on $\left[ 0, m+1 \right]$ (cfr. \ref{supportominimo}) whose prominent property, and the one we will need, is that translating it at steps of length one we obtain a basis of the whole space of cardinal splines. Again, refer to \cite{Schoenberg} for full treatment.
We can limit ourselves to show the thesis on the left side, and there's no loss assuming $\alpha_{0}=0$. 
Consider\protect\footnote{$T_{\lambda}$ denotes the unidimensional translation operator: $T_{\lambda} f:x \mapsto f(x- \lambda )$}
\begin{align*}
s^{-}(x):=\sum_{j=-m}^{0}\lambda_{j}T_{j}B_{m}(x).
\end{align*}
As just said, we can find a set of coefficients $\left\{ \lambda_{j} \right\}_{j=-m,\ldots ,0}$ such that $s^{-}$ and $s$ coincide on $\left[0,min\{\alpha_{1},1\}\right]$. This implies that the ``gluing''
\begin{gather*}
\left(s^{-}\left|\right|s\right)(x):=\left\{
\begin{matrix}
s^{-}(x), & \text{se } x \leq 0
\\
s(x), & \text{se } x \geq 0
\end{matrix}
\right.
\end{gather*}
is still a spline of degree $m$ having all of its knots included in the set
\begin{equation*}
\left\{
\alpha_{0}-m,\alpha_{0}-m+1,\ldots,\alpha_{0},\ldots,\alpha_{n}
\right\}
\end{equation*}
and is obviously identically zero at the left of $\alpha_{0}-m$.
\end{proof}
We finally get to the main result of this paper.
\begin{theorem}
\label{principale}
A spline of degree $m$ and knots $\alpha_{0}<\ldots<\alpha_{n}$ has at most $n+m-1$ separated zeros in $\left[\alpha_{0},\alpha_{n}\right]$. 
\end{theorem}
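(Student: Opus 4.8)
The plan is to reduce the statement to \ref{fondamentale} by extending $s$ to a spline with compact support, for which the outermost knots are automatically zeros of maximal order. First I would invoke \ref{estensione} to produce a compact-supported spline $\bar{s}$ that agrees with $s$ on $\left[\alpha_0,\alpha_n\right]$ and whose ordered knots $\beta_0<\cdots<\beta_N$ all lie in the set $\left\{\alpha_0-m,\ldots,\alpha_0,\ldots,\alpha_n,\ldots,\alpha_n+m\right\}$. Since at most $m$ knots are adjoined on each side of the $n+1$ original ones, $\bar{s}$ has at most $n+2m+1$ knots, i.e. $N\leq n+2m$.

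Next I would verify that $\bar{s}$ meets the hypotheses of \ref{fondamentale}. Because $\bar{s}$ has compact support it vanishes identically to the left of $\beta_0$ and to the right of $\beta_N$; hence its outermost knots $\beta_0,\beta_N$ are zeros of order $\geq m$ (in fact of infinite order, cf. the footnote to \ref{fondamentale}), and they are mutually separated as long as $\bar{s}\not\equiv 0$. Thus \ref{fondamentale} applies and bounds the separated zeros of $\bar{s}$ in $\left]\beta_0,\beta_N\right[$ by $N-m-1$. Combining with the knot count gives $N-m-1\leq(n+2m)-m-1=n+m-1$, where the crucial cancellation of one block of $m$ adjoined knots against the $-m$ in the proposition's conclusion produces exactly the target bound.

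Finally I would transfer the estimate back to $s$. Since $\bar{s}$ coincides with $s$ on $\left[\alpha_0,\alpha_n\right]$, any $s$-separated set of zeros of $s$ contained in $\left[\alpha_0,\alpha_n\right]$ is, verbatim, an $\bar{s}$-separated set of zeros of $\bar{s}$; provided this set sits inside the open interval $\left]\beta_0,\beta_N\right[$, it is counted among the at most $n+m-1$ separated zeros furnished above, which is precisely the assertion. The trivial case $\bar{s}\equiv 0$ forces $s$ constant on $\left[\alpha_0,\alpha_n\right]$, leaving no separated zeros at all.

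I expect the main obstacle to be the endpoint bookkeeping, namely justifying the inclusion $\left[\alpha_0,\alpha_n\right]\subseteq\left]\beta_0,\beta_N\right[$, so that the two guaranteed boundary zeros $\beta_0,\beta_N$ of $\bar{s}$ lie \emph{strictly outside} $\left[\alpha_0,\alpha_n\right]$ and do not consume part of the budget. When the extension genuinely adjoins knots on both sides one has $\beta_0\leq\alpha_0-1<\alpha_0$ and $\alpha_n<\alpha_n+1\leq\beta_N$, so the inclusion is strict and the argument closes. In the degenerate situation where fewer than $m$ knots are adjoined on a given side, the index $N$ drops by exactly the number of missing knots, so the bound $N-m-1$ improves at the same rate as the loss of strict containment; I would make this balancing explicit as the one delicate point, checking that the final estimate $n+m-1$ is preserved in every case.
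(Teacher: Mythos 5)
Your proposal follows essentially the same route as the paper's own proof: extend $s$ to a compact-supported spline $\bar{s}$ via \ref{estensione}, apply \ref{fondamentale} to get at most $(n+2m)-m-1=n+m-1$ separated zeros, and transfer the bound back to $\left[\alpha_0,\alpha_n\right]$ where $s$ and $\bar{s}$ coincide. The endpoint bookkeeping you flag as the delicate point is in fact glossed over by the paper (which tacitly treats $\alpha_0-m$ and $\alpha_n+m$ as the outermost knots), and your balancing observation resolves it correctly: an endpoint can only coincide with an outermost knot of $\bar{s}$ when a whole side of $m$ knots is missing, costing at most one zero while improving the bound by $m\geq 1$.
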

\begin{proof}
Called $s$ the given spline, let us build, as granted by \ref{estensione}, its compact-supported extension $\bar{s}$. 
The latter has at most $N=n+1+2m$ knots, so by \eqref{fondamentale} will also have at most $N-m-2=n+m-1$ separated zeros in   $\left]\alpha_{0}-m,\alpha_{n}+m\right[$.
So its zeros in $\left[\alpha_{0},\alpha_{n}\right]$ are at most $n+m-1$ as well, whence the thesis since there $s$ and $\bar{s}$ coincide by construction.
\end{proof}
The following corollary give a precise meaning to the idea that \ref{principale} permits to render a spline zero everywhere by scattering a sufficient number of zeros inside its nodes, and in the right places.
This way we get a result not resorting to the notion of separated zeros.
\begin{cor}
\label{finale}
If $s$ is a spline of degree $m$ and knots $\alpha_{0} <  \ldots < \alpha_{n}$ such that
\begin{enumerate}
\item
$
\label{ipotesi1}
\text{card}(s^{-1}(\{0\}) \cap \left[ \alpha_{0}, \alpha_{n} \right]) \geq n+m
$
\item
\label{ipotesi2}
$
 \forall j=1,\ldots,n: \quad \left( \left| s \left( \alpha_{j-1} \right) \right| + \left| s \left(\alpha_{j}\right) \right| \right) \cdot s\left(\left] \alpha_{j-1},\alpha_{j}\right[ \right) \supset \{0\}
$ 
\end{enumerate}
Then $s$ is identically zero on $\left[ \alpha_{0}, \alpha_{n} \right]$.
\end{cor}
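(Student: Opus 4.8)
The plan is to argue by contradiction: I would assume $s$ is \emph{not} identically zero on $[\alpha_0,\alpha_n]$ and contradict one of the two hypotheses together with \ref{principale}. First I would unpack hypothesis (2). For a fixed $j$ the factor $|s(\alpha_{j-1})|+|s(\alpha_j)|$ is a nonnegative real, so it is either zero --- forcing $s(\alpha_{j-1})=s(\alpha_j)=0$, in which case the containment $\{0\}\subset(|s(\alpha_{j-1})|+|s(\alpha_j)|)\cdot s(]\alpha_{j-1},\alpha_j[)$ holds trivially --- or it is strictly positive, in which case the containment forces $0\in s(]\alpha_{j-1},\alpha_j[)$, i.e. $s$ vanishes somewhere \emph{inside} the domain. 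Thus hypothesis (2) reads: on each polynomiality domain either both bounding knots are zeros of $s$, or $s$ has an interior zero.

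Next I would use hypothesis (1) and \ref{principale} to locate at least one \emph{flat} domain, meaning a polynomiality domain on which $s$ vanishes identically. Suppose no flat domain existed. Then on each of the $n$ domains $s$ would restrict to a nonzero polynomial with finitely many isolated zeros, so the whole set $s^{-1}(\{0\})\cap[\alpha_0,\alpha_n]$ would be finite and made of isolated points. Any two such points $a<b$ are $s$-separated, since $s$ cannot be constant on $[a,b]$: were it constant it would vanish on all of $[a,b]$ (as $s(a)=0$), contradicting isolation. Hence the entire zero set is an $s$-separated family, and by hypothesis (1) it has at least $n+m$ elements, contradicting \ref{principale}, which bounds the number of separated zeros in $[\alpha_0,\alpha_n]$ by $n+m-1$. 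Therefore some domain must be flat.

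The heart of the argument is then a propagation step showing a single flat domain forces all its neighbours flat, hence $s\equiv 0$. Suppose $s\equiv 0$ on $[\alpha_{j-1},\alpha_j]$ and set $p:=s|_{[\alpha_j,\alpha_{j+1}]}$, a polynomial of degree $\le m$. Since $s\in C^{m-1}$ and vanishes to the left of $\alpha_j$, all derivatives $p^{(i)}(\alpha_j)$ for $i=0,\dots,m-1$ vanish, so $\alpha_j$ is a root of $p$ of multiplicity at least $m$; as $\deg p\le m$ this leaves only $p\equiv 0$ or $p(x)=c(x-\alpha_j)^m$ with $c\neq 0$. In the latter case $p$ has no zero in $]\alpha_j,\alpha_{j+1}[$, while $s(\alpha_j)=0$ and $s(\alpha_{j+1})=c(\alpha_{j+1}-\alpha_j)^m\neq 0$, so hypothesis (2) on this domain would demand an interior zero that does not exist --- a contradiction. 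Hence $p\equiv 0$ and the right neighbour is flat; the symmetric computation at $\alpha_{j-1}$ flattens the left neighbour. Iterating from the flat domain toward both ends makes every domain flat, so $s\equiv 0$ on $[\alpha_0,\alpha_n]$, against our assumption. I expect this propagation to be the genuine obstacle: the decisive, non-obvious point is that $C^{m-1}$-smoothness against a flat piece pins the adjacent polynomial to a pure power $c(x-\alpha_j)^m$, which is precisely what makes hypothesis (2) incompatible with $c\neq 0$; the multiplicity bookkeeping and the endpoint cases $j=1$ and $j=n$ are where care is needed.
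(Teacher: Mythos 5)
Your proof is correct and follows essentially the same route as the paper: Theorem \ref{principale} forces at least one polynomiality domain on which $s$ vanishes identically, and then $C^{m-1}$-regularity at the junction between a flat domain and a non-flat neighbour yields an order-$m$ zero of the neighbouring polynomial $p$, which together with the extra zero demanded by hypothesis (2) exceeds $\deg p \le m$ and forces $p \equiv 0$. You merely fill in details the paper leaves implicit (the isolation argument locating the flat domain, the explicit form $c(x-\alpha_j)^m$) and phrase the final contradiction as propagation of flatness rather than as a single flat/non-flat adjacent pair; the key idea is identical.
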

The formulae \eqref{ipotesi1} and \eqref{ipotesi2} are a compact symbolic way to render the request that $s$
is zero in at least $n+m$ points (hypothesis \eqref{ipotesi1}) which moreover are ``scattered enough'' along $\left[\alpha_{0}, \alpha_{n}\right]$ (hypothesis \eqref{ipotesi2}):
indeed \eqref{ipotesi2} succinctly expresses the condition that $s$ is zero either in \emph{one} point of the interior or in \emph{both} the ends of any given polynomiality domain.
\begin{proof}
The $m+n$ zeros, can't be separated as by \ref{principale}, therefore $s$ is identically zero on at least one polynomiality domain.
We shall see that it is zero on the \emph{whole} $\left[ \alpha_{0}, \alpha_{n} \right]$.
\\
By contradiction.
There would be, for example,
\begin{equation*}
j \in \left\{ 1,2,\ldots,n \right\}/
\left\{
\begin{matrix}
p:=s|_{\left[ \alpha_{j-1},\alpha_{j} \right] } \neq 0
\\
s|_{ \left[ \alpha_{j},\alpha_{j+1} \right] } =0
\end{matrix}
\right.
\end{equation*}
To preserve the regularity of $s$, $p$ should have a zero of order $m$ in $\alpha_{j}$.
\\
But according to \eqref{ipotesi2}, $p$ has an additional  zero in $\left[ \alpha_{j-1},\alpha_{j} \right[$, whilst its degree is just $m$.
\end{proof}
\ref{finale} is enough to prove the claim mentioned in the abstract, so we consider this section has reached its goal. We just give a formal proof of the highly intuitive statement obtained by restricting \ref{fondamentale} to the special case $m=1$, which in turn has been employed in the proof of \ref{fondamentale} itself.
\begin{lemma}
\label{sega}
A spline of degree $1$ and knots $\alpha_{0}<\ldots<\alpha_{n}, n \in \mathbb{Z^{+}}$ admits at most  $n$ separated zeros in $\left[ \alpha_{0},\alpha_{n}\right]$.
\end{lemma}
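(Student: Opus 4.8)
The plan is to count separated zeros by pairing each gap between two consecutive zeros with a distinct interior knot. First I fix an arbitrary $s$-separated set of zeros $z_1 < z_2 < \cdots < z_N$ contained in $\left[\alpha_0,\alpha_n\right]$; the goal is to prove $N \le n$. The cases $N=0$ and $N=1$ are immediate since $n \ge 1$, so I may henceforth assume $N \ge 2$.

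The key step is the observation that between any two consecutive separated zeros there must lie a knot. Indeed, suppose towards a contradiction that the open interval $\left]z_j,z_{j+1}\right[$ contains no knot. Then $z_j$ and $z_{j+1}$ belong to a single polynomiality domain $\left[\alpha_{i-1},\alpha_i\right]$, on which $s$ restricts to an affine function. An affine function possessing the two distinct zeros $z_j \ne z_{j+1}$ must vanish identically on $\left[z_j,z_{j+1}\right]$; but then $s$ is constant there, contradicting that $z_j$ and $z_{j+1}$ are $s$-separated (see \ref{ZeriSeparati}). Hence every open interval $\left]z_j,z_{j+1}\right[$ contains at least one knot.

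Now the $N-1$ open intervals $\left]z_1,z_2\right[,\ldots,\left]z_{N-1},z_N\right[$ are pairwise disjoint, so together they account for at least $N-1$ distinct knots, all lying strictly inside $\left]z_1,z_N\right[$. Since $z_1 \ge \alpha_0$ and $z_N \le \alpha_n$, neither $\alpha_0$ nor $\alpha_n$ can lie strictly inside $\left]z_1,z_N\right[$, so these knots are drawn from the $n-1$ interior knots $\alpha_1,\ldots,\alpha_{n-1}$. Therefore $N-1 \le n-1$, that is $N \le n$, which is the assertion.

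I expect the only delicate point to be the key step, where the separation hypothesis must be invoked in exactly the right place: it is precisely what excludes an affine piece with two zeros, i.e.\ the identically-zero segment responsible for the ``flat zone'' pathology that motivated \ref{ZeriSeparati} in the first place. Everything else is a disjointness-and-counting argument, and the one subtlety is the boundary bookkeeping — ensuring the endpoint knots $\alpha_0$ and $\alpha_n$ are not among those counted — which is what pins the bound down to exactly $n$ rather than $n+1$.
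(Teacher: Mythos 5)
Your proof is correct, but it follows a genuinely different route from the paper's. The paper proves \ref{sega} by induction on the number of knots $n$: the inductive hypothesis bounds by $n-1$ the separated zeros of $s$ restricted to $\left[\alpha_{0},\alpha_{n-1}\right]$, and then a case split on whether $s(\alpha_{n-1})=0$ or $s(\alpha_{n-1})\neq 0$ shows that the last domain $\left[\alpha_{n-1},\alpha_{n}\right]$ can contribute at most one additional separated zero. You instead prove a direct interleaving statement: between any two consecutive separated zeros there must lie a knot, because otherwise the two zeros sit in a single closed polynomiality domain where $s$ is affine, hence identically zero between them, violating separation as defined in \ref{ZeriSeparati}; the bound then follows by pigeonhole, since the $N-1$ pairwise disjoint gaps consume $N-1$ distinct knots which must all be among the $n-1$ interior ones. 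Both arguments hinge on the same elementary fact (an affine function with two distinct zeros is constant zero, which is exactly what separation excludes), but yours avoids induction entirely, yields the slightly stronger structural fact that separated zeros and knots interleave, and makes the endpoint bookkeeping — why the answer is $n$ and not $n+1$ — completely explicit. The paper's inductive formulation, for its part, mirrors the induction on the degree $m$ used in \ref{fondamentale}, for which \ref{sega} serves as the base case, so it keeps the paper's machinery uniform; your argument would serve equally well there and is arguably the cleaner self-contained proof.
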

\begin{proof}
By induction on $n$. If $n=1$, and $\left[\alpha_{0},\alpha_{1}\right]$ contains two distinct zeros, they oviously cannot be separated.
Let us now suppose that $s$ is a spline of knots $\alpha_{0} < \ldots < \alpha_{n}$, and let $k$ be the maximum number of separated knots of $s$ restricted to $\left[ \alpha_{0},\alpha_{n-1}\right]$.
By the inductive hypothesis $k\leq n-1$. Now we reason distinguishing two separate cases:
\begin{itemize}
\item
$s(\alpha_{n-1})=0$
\\ 
In this case the zeros of $s$ in $\left] \alpha_{n-1},\alpha_{n}\right]$ can't be separated from $\alpha_{n-1}$, because $s|_{\left[ \alpha_{n-1},\alpha_{n} \right]}$ is a first degree polynomial.
\item
$s(\alpha_{n-1}) \neq 0$
\\
In this case $s|_{\left[\alpha_{n-1},\alpha_{n} \right]}$ is a non-identically-null polynomial, having therefore at most one zero, so  $s$ has at most one zero in $\left[ \alpha_{n-1},\alpha_{n}\right]$.
\end{itemize}
In any case the total number of separated zeros in $\left[ \alpha_{0},\alpha_{n} \right]$ is $\leq k+1 \leq n$.
\end{proof}

\section{Validating the conjecture}
We briefly introduce a framework for multivariate splines which generalizes the unidimensional case treated until now; see \cite{Procesi} for details.
In the unidimensional case we had polynomiality domains given by bounded intervals and the boundaries between them given by points (knots). In the case of $\mathbb{R}^{s}$ they become respectively polyhedra and hyperplanes. An interesting fact is that under unimodular hypothesis one can still build a family, given by the so-called Box splines, having many analogies with that of the B-splines.
One just has to slightly modify the way of describing the polynomiality domains to adequate it to the increased degrees of freedom. Instead of explicitly enumerate the knots, one gives a finite $m$-ple $X:= \left( a_{1}, \ldots, a_{m} \right)$ of vectors of $\mathbb{R}^{s}$, and builds the lattice it generates. The cells thus obtained are the polynomiality domains we wanted to describe.
Given $X$, a unique box spline $B_{X}$ is determined upon requests of completeness and minimum support. Its degree is $m-s$ and its support is\footnote{The notation $\sum_{A}^{B}$, where $V \supseteq A$ is a vector space over the field $\mathbb{K} \supseteq B$, denotes all the possible linear combinations of vectors in $A$ with coefficients in B.} the convex hull $\sum_{X}^{[0,1]}$ of $\sum_{X}^{\left\{ 0, 1 \right\}} $.
At this point we have the means to restate the conjecture stated in \cite{Procesi}, section entitled ``Explicit Projections''.
\begin{claim}
\label{Congettura}
Given a $m$-ple $X:=\left( a_{1}, \ldots, a_{m} \right) \subseteq \mathbb{R}^{s}$ and defined as
\begin{equation*}
 \{\omega_1,\omega_{2},\ldots,\omega_{n}\}=\Omega:=\overset{\circ}{\Sigma}\vphantom{\Sigma}_{X}^{[0,1]}\cap \frac{1}{2}\Sigma_{X}^{\mathbb{Z}}
\end{equation*}
the set of the semi-integral points of the interior\footnote{If $X$ does not span $\mathbb{R}^{s}$ one must resort to the notion of relative interior, cfr \cite{Procesi}.} of the support $\sum_{X}^{\left[ 0, 1 \right]}$ of $B_{X}$, the matrix
\begin{align*}
\left( A_{X} \right)_{i,j}:=B_X(\Sigma_X^{\{1\}}+\omega_i-2\omega_j)
\end{align*}
is invertible.
\end{claim}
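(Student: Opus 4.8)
The plan is to recast invertibility of $A_X$ as injectivity of a sampling map on box-spline combinations. Put $c := \Sigma_X^{\{1\}}$ and, for a coefficient vector $v = (v_1,\ldots,v_n)$, set $g_v(x) := \sum_{j=1}^n v_j\, B_X(c + x - 2\omega_j)$. Since $\omega_j \in \tfrac12\Sigma_X^{\mathbb{Z}}$ and $c \in \Sigma_X^{\mathbb{Z}}$, each shift $2\omega_j - c$ lies in the lattice $\Sigma_X^{\mathbb{Z}}$, so $g_v$ is a combination of lattice translates of $B_X$; and by construction $g_v(\omega_i) = \sum_j (A_X)_{i,j} v_j = (A_X v)_i$. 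Thus $A_X$ is invertible if and only if the only $v$ with $g_v$ vanishing on all of $\Omega$ is $v = 0$. As $|\Omega| = n$ equals the number of unknowns, the strategy is to prove that vanishing on $\Omega$ forces $g_v \equiv 0$, and then to recover $v = 0$ from (local) linear independence of the translates.

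First I would carry this out in the univariate unimodular case $s = 1$, $X = (1,\ldots,1)$, where $B_X$ is the cardinal B-spline of degree $m-1$ supported on $[0,m]$, $c = m$, and $\Omega = (0,m) \cap \tfrac12\mathbb{Z} = \{\tfrac12,1,\tfrac32,\ldots,m-\tfrac12\}$ has $n = 2m-1$ points. Restricted to $[0,m]$ the spline $g_v$ has degree $m-1$ and knots $0 < 1 < \cdots < m$, i.e.\ $m$ polynomiality domains; the $n = 2m-1$ sample points split into the $m$ domain-midpoints $\tfrac12,\ldots,m-\tfrac12$ and the $m-1$ interior knots $1,\ldots,m-1$. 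This is exactly the configuration \ref{finale} is built for: hypothesis \ref{ipotesi1} holds because $2m-1$ equals (number of domains) $+$ (degree), and hypothesis \ref{ipotesi2} holds because every domain contains an interior zero, namely its midpoint. Thus \ref{finale} yields $g_v \equiv 0$ on $[0,m]$. Writing $\beta_j := 2\omega_j - m$, which runs bijectively over the integers $\{1-m,\ldots,m-1\}$ so that $g_v = \sum_j v_j B_X(\cdot - \beta_j)$, I would then march across the unit subintervals $[0,1],[1,2],\ldots,[m-1,m]$: on each only finitely many translates are supported, their restrictions are linearly independent, and $g_v \equiv 0$ peels off the coefficients one interval at a time, forcing every $v_j = 0$. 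Hence $A_X$ is invertible in this case.

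The hard part is everything beyond this special case, and here I expect the plan to genuinely stall --- by design. The reformulation of the first paragraph is valid in every dimension, but the engine that closes it, \ref{finale} (ultimately \ref{principale} and \ref{fondamentale}), is irreducibly one-dimensional: it bounds the number of \emph{separated} zeros of a univariate spline, whereas for $s > 1$ the zero set of $g_v$ is a hypersurface and no finite list of conditions on $\Omega$ can force $g_v \equiv 0$ by such counting. Worse, the final step silently used unimodularity, since the lattice translates of $B_X$ are linearly independent precisely when $X$ is unimodular, and this is exactly where the statement \emph{as worded} ceases to be true. So the obstruction is structural rather than a missing lemma: the claim holds in the univariate unimodular regime, where it follows from \ref{finale} as above; it becomes false as soon as unimodularity is dropped, where the translates become dependent, the sampling map on $\Omega$ can be singular, and an explicit $X$ with $\det A_X = 0$ can be exhibited; and the genuinely multivariate case lies beyond the reach of these one-dimensional tools, no analogue of the zero-count being available. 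A complete and honest treatment of \ref{Congettura} therefore establishes it in the univariate unimodular case and \emph{refutes} it in general, the latter being forced precisely by the absence of any multivariate, non-unimodular counterpart of the univariate zero-count.
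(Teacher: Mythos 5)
Your proposal matches the paper's own treatment of \ref{Congettura} essentially step for step: the paper likewise proves only the univariate unimodular case, by reading a null combination of the columns of $A_{X_m}$ as a cardinal spline $\sum_{j} d_j\, T_{\lambda_j} B_m$ vanishing at the $2m+1$ semi-integral points of $[0,m+1]$, invoking \ref{finale} (hypotheses \eqref{ipotesi1} and \eqref{ipotesi2} verified exactly as you verify them, via the midpoint zeros) to force identical vanishing there, and concluding $d_j = 0$ from linear independence of the B-spline translates on that interval; and it likewise refutes the general claim by a non-unimodular counterexample, namely the symbolically computed $\det A_{B_2} = 0$ for $X = B_2 = ((1,0),(1,1),(0,1),(-1,1))$. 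The only differences are cosmetic or minor: your indexing shifts $m$ by one relative to the paper's $X_m$ with $m+1$ ones, your interval-by-interval ``peeling'' spells out the local linear independence of the translates that the paper merely asserts (arguably more carefully, since global independence plus non-vanishing does not by itself give independence of restrictions), and you assert the existence of a non-unimodular counterexample from the failure of translate independence where the paper actually exhibits one by explicit computation.
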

\subsection{Not without unimodularity}
The symbolic calculation software \cite{Caminati} has been exploited to show that in case $X$ is not unimodular there exists a simple counterexample to \ref{Congettura}. A symbolic manipulation had to be utilized since the determinant to be computed happens to be very near to zero even in simple cases. The counterexample is obtained by taking $X=B_{2}:=((1,0),(1,1),(0,1),(-1,1))$ and getting
\begin{equation*}
A_{B_{2}}=
        \left(
\begin{smallmatrix}
 \frac{1}{8} & 0 & \frac{1}{8} & 0 & 0 & \frac{1}{2} & 0 & 0 & \frac{1}{8} & \frac{1}{8} & 0 & 0 & 0 & 0 & 0 & 0 & 0 & 0 & 0 & 0 & 0 \cr \frac{1}{16} & 
    \frac{1}{16} & \frac{3}{8} & 0 & 0 & \frac{3}{8} & \frac{1}{16} & 0 & 0 & \frac{1}{16} & 0 & 0 & 0 & 0 & 0 & 0 & 0 & 0 & 0 & 0 & 0 \cr 0 & 0 & \frac{1}
   {4} & 0 & 0 & \frac{1}{4} & \frac{1}{4} & 0 & 0 & \frac{1}{4} & 0 & 0 & 0 & 0 & 0 & 0 & 0 & 0 & 0 & 0 & 0 \cr 0 & 0 & \frac{3}{8} & \frac{1}{16} & 0 & \frac{1}
   {16} & \frac{3}{8} & \frac{1}{16} & 0 & \frac{1}{16} & 0 & 0 & 0 & 0 & 0 & 0 & 0 & 0 & 0 & 0 & 0 \cr \frac{1}{16} & 0 & 0 & 0 & \frac{1}{16} & \frac{3}
   {8} & 0 & 0 & \frac{3}{8} & \frac{1}{16} & 0 & 0 & 0 & \frac{1}{16} & 0 & 0 & 0 & 0 & 0 & 0 & 0 \cr 0 & 0 & \frac{1}{16} & 0 & 0 & \frac{3}{8} & \frac{1}
   {16} & 0 & \frac{1}{16} & \frac{3}{8} & 0 & 0 & 0 & \frac{1}{16} & 0 & 0 & 0 & 0 & 0 & 0 & 0 \cr 0 & 0 & \frac{1}{16} & 0 & 0 & \frac{1}{16} & \frac{3}
   {8} & 0 & 0 & \frac{3}{8} & \frac{1}{16} & 0 & 0 & 0 & \frac{1}{16} & 0 & 0 & 0 & 0 & 0 & 0 \cr 0 & 0 & \frac{1}{8} & 0 & 0 & 0 & \frac{1}{2} & \frac{1}
   {8} & 0 & \frac{1}{8} & \frac{1}{8} & 0 & 0 & 0 & 0 & 0 & 0 & 0 & 0 & 0 & 0 \cr 0 & 0 & 0 & 0 & 0 & \frac{1}{4} & 0 & 0 & \frac{1}{4} & \frac{1}
   {4} & 0 & 0 & 0 & \frac{1}{4} & 0 & 0 & 0 & 0 & 0 & 0 & 0 \cr 0 & 0 & 0 & 0 & 0 & \frac{1}{8} & \frac{1}{8} & 0 & 0 & \frac{1}{2} & 0 & 0 & 0 & \frac{1}{8} & 
    \frac{1}{8} & 0 & 0 & 0 & 0 & 0 & 0 \cr 0 & 0 & 0 & 0 & 0 & 0 & \frac{1}{4} & 0 & 0 & \frac{1}{4} & \frac{1}{4} & 0 & 0 & 0 & \frac{1}
   {4} & 0 & 0 & 0 & 0 & 0 & 0 \cr 0 & 0 & 0 & 0 & 0 & 0 & \frac{3}{8} & \frac{1}{16} & 0 & \frac{1}{16} & \frac{3}{8} & \frac{1}{16} & 0 & 0 & \frac{1}
   {16} & 0 & 0 & 0 & 0 & 0 & 0 \cr 0 & 0 & 0 & 0 & 0 & \frac{1}{16} & 0 & 0 & \frac{3}{8} & \frac{1}{16} & 0 & 0 & \frac{1}{16} & \frac{3}{8} & 0 & \frac{1}
   {16} & 0 & 0 & 0 & 0 & 0 \cr 0 & 0 & 0 & 0 & 0 & \frac{1}{16} & 0 & 0 & \frac{1}{16} & \frac{3}{8} & 0 & 0 & 0 & \frac{3}{8} & \frac{1}{16} & 0 & \frac{1}
   {16} & 0 & 0 & 0 & 0 \cr 0 & 0 & 0 & 0 & 0 & 0 & \frac{1}{16} & 0 & 0 & \frac{3}{8} & \frac{1}{16} & 0 & 0 & \frac{1}{16} & \frac{3}{8} & 0 & \frac{1}
   {16} & 0 & 0 & 0 & 0 \cr 0 & 0 & 0 & 0 & 0 & 0 & 0 & 0 & \frac{1}{8} & \frac{1}{8} & 0 & 0 & 0 & \frac{1}{2} & 0 & \frac{1}{8} & \frac{1}
   {8} & 0 & 0 & 0 & 0 \cr 0 & 0 & 0 & 0 & 0 & 0 & 0 & 0 & 0 & \frac{1}{4} & 0 & 0 & 0 & \frac{1}{4} & \frac{1}{4} & 0 & \frac{1}
   {4} & 0 & 0 & 0 & 0 \cr 0 & 0 & 0 & 0 & 0 & 0 & 0 & 0 & 0 & \frac{1}{8} & \frac{1}{8} & 0 & 0 & 0 & \frac{1}{2} & 0 & \frac{1}{8} & \frac{1}
   {8} & 0 & 0 & 0 \cr 0 & 0 & 0 & 0 & 0 & 0 & \frac{1}{16} & 0 & 0 & \frac{1}{16} & \frac{3}{8} & 0 & 0 & 0 & \frac{3}{8} & 0 & 0 & \frac{1}{16} & \frac{1}
   {16} & 0 & 0 \cr 0 & 0 & 0 & 0 & 0 & 0 & 0 & 0 & 0 & \frac{1}{16} & 0 & 0 & 0 & \frac{3}{8} & \frac{1}{16} & \frac{1}{16} & \frac{3}{8} & 0 & 0 & \frac{1}
   {16} & 0 \cr 0 & 0 & 0 & 0 & 0 & 0 & 0 & 0 & 0 & \frac{1}{16} & 0 & 0 & 0 & \frac{1}{16} & \frac{3}{8} & 0 & \frac{3}{8} & \frac{1}{16} & 0 & 0 & \frac{1}
   {16} \cr  
\end{smallmatrix}
\right)
\end{equation*}
which has symbolically-computed determinant $0$.
On the other hand, the \emph{unimodular} case $X=A_{2}:=((1,0),(1,1),(0,1))$ complies with \ref{Congettura}:
\begin{eqnarray*}
A_{A_2}=\left(
\begin{smallmatrix} 
\frac{1}{2} & 0 & \frac{1}{2} & 0 & 0 & 0 & 0 \cr 0 & \frac{1}
   {2} & \frac{1}
   {2} & 0 & 0 & 0 & 0 \cr 0 & 0 & 1 & 0 & 0 & 0 & 0 \cr 0 & 0 & \frac{1}
   {2} & \frac{1}{2} & 0 & 0 & 0 \cr 0 & 0 & \frac{1}{2} & 0 & \frac{1}
   {2} & 0 & 0 \cr 0 & 0 & \frac{1}{2} & 0 & 0 & \frac{1}
   {2} & 0 \cr 0 & 0 & \frac{1}{2} & 0 & 0 & 0 & \frac{1}{2} \cr
\end{smallmatrix}
\right) & det(A_{A_2})=\frac{1}{64}
\end{eqnarray*}
which leads to our last result.
\subsection{True in unidimensional, unimodular case}
In this case $X$ must be made up of (repeated) $1$'s and/or $-1$'s, so that we can reduce to the case in which
\begin{enumerate}
\item
$X=X_{m}:=\underbrace{(1,1,\ldots,1)}_{m+1\textrm{ times}}$
\item
$B_{X} = B_{m}$
\item
\begin{align*}
\left\{\frac{1}{2},1,\frac{3}{2},2,\ldots,m+\frac{1}{2}\right\} & , & \left| \Omega \right| = 2m+1
\end{align*}
\end{enumerate}
\begin{theorem}
$A_{X_{m}}$ is nonsingular.
\end{theorem}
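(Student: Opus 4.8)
The plan is to translate the nonsingularity of $A_{X_m}$ into a statement about the zeros of an associated cardinal spline, and then invoke \ref{finale}. First I would record the data explicitly: since $X_m=(1,\ldots,1)$ consists of $m+1$ ones, one has $\Sigma_{X_m}^{\{1\}}=m+1$, while $\omega_i=i/2$ for $i=1,\ldots,2m+1$. Hence
\begin{equation*}
(A_{X_m})_{i,j}=B_m\!\left(m+1+\tfrac{i}{2}-j\right).
\end{equation*}
Given a coefficient vector $c=(c_1,\ldots,c_{2m+1})$, I would introduce the function
\begin{equation*}
h(x):=\sum_{j=1}^{2m+1}c_j\,B_m(m+1+x-j)=\sum_{k=-m}^{m}c_{k+m+1}\,B_m(x-k),
\end{equation*}
a cardinal spline of degree $m$ supported on $[-m,2m+1]$. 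The point of this rewriting is that $(A_{X_m}c)_i=h(\omega_i)$, so that $A_{X_m}c=0$ is \emph{equivalent} to $h$ vanishing at all $2m+1$ points $\tfrac12,1,\tfrac32,\ldots,m+\tfrac12$. Proving nonsingularity thus reduces to showing that this prescribed set of zeros forces $c=0$.

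Next I would apply \ref{finale} to $h$ on the interval $[0,m+1]$, taking as knot sequence $\alpha_0=0,\alpha_1=1,\ldots,\alpha_{m+1}=m+1$, so that $n=m+1$. Hypothesis \eqref{ipotesi1} demands at least $n+m=2m+1$ zeros in $[0,m+1]$, and these are supplied exactly by the $2m+1$ half-integers $\omega_i\in[\tfrac12,m+\tfrac12]\subset[0,m+1]$. Hypothesis \eqref{ipotesi2} demands, for each polynomiality domain $(j-1,j)$ with $j=1,\ldots,m+1$, a zero in its interior or at both its endpoints; here each such domain contains the interior zero $\omega_{2j-1}=j-\tfrac12$, so the condition is met in its easiest form. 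Therefore \ref{finale} yields $h\equiv 0$ on $[0,m+1]$.

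Finally I would deduce $c=0$ from the local linear independence of the $B$-spline translates. On the single knot interval $[0,1]$ the only translates $B_m(\,\cdot\,-k)$ that are not identically zero are those with $k=-m,\ldots,0$, and, being restrictions of $B$-splines to one knot interval, they are linearly independent there (cfr. \cite{Schoenberg}); since $h\equiv0$ on $[0,1]$, this forces $c_1=\cdots=c_{m+1}=0$. Running the same argument on $[m,m+1]$, where the surviving translates are those with $k=0,\ldots,m$, forces $c_{m+1}=\cdots=c_{2m+1}=0$. Hence $c=0$ and $A_{X_m}$ is nonsingular.

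The main obstacle, and the real content of the argument, is checking that \ref{finale} applies with \emph{exactly} the right bookkeeping: the number of available zeros, $2m+1$, must match the threshold $n+m$ on the nose, and the half-integers must be positioned so that \emph{every} polynomiality domain of $[0,m+1]$ carries an interior zero. Everything hinges on the fortunate alignment $\omega_{2j-1}=j-\tfrac12\in(j-1,j)$ together with $\omega_{2j}=j$; a coarser spacing of $\Omega$ would violate \eqref{ipotesi2}. The reformulation step (identifying $A_{X_m}c=0$ with the vanishing of $h$ at the $\omega_i$) and the closing appeal to local linear independence are comparatively routine, the latter being a standard property of $B$-splines. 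One should also note that declaring $0,1,\ldots,m+1$ to be the knots is legitimate even should $h$ turn out smooth at some of them, since the counting results remain valid for any knot sequence containing the genuine knots.
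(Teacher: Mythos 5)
Your proposal is correct and takes essentially the same route as the paper: both reinterpret a kernel vector of $A_{X_m}$ as a cardinal spline $\sum_j c_j T_{\lambda_j}B_m$ vanishing at the $2m+1$ points of $\Omega$, verify hypotheses \eqref{ipotesi1} and \eqref{ipotesi2} of \ref{finale} to force the spline to vanish identically on $\left[0,m+1\right]$, and then conclude $c=0$ from linear independence of the B-spline translates. Your closing step is in fact slightly more careful than the paper's (which appeals only to global linear independence plus non-vanishing of each translate on $\left[0,m+1\right]$), since you invoke local linear independence on the knot intervals $\left[0,1\right]$ and $\left[m,m+1\right]$, which is the property actually needed to kill the coefficients.
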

\begin{proof}
Let us regard the $j$-th column $c_{j}$ of $A_{X_{m}}$ as the image through the translated B-spline
\begin{align*}
T_{2\omega_j - \Sigma_X^{\{1\}}}B_{X_{m}}=:T_{\lambda_j}B_{X_{m}}
\end{align*}
of the set
\begin{align*}
\Omega = \left\{\frac{1}{2},1,\frac{3}{2},2,\ldots,m+\frac{1}{2}\right\}
\end{align*}
Now let $\sum_{j=1}^{2m+1}d_j \cdot c_{j}$ be a null linear combination of the $c_{j}$'s. In our current view this means that the cardinal spline $\sum_{j=1}^{2m+1}d_j \cdot T_{\lambda_j}B_m$ is zero in $2m+1$ distinct points over $\left[ 0, m+1 \right]$  satisfying also request \eqref{ipotesi2} of \ref{finale}, which then allows to conclude that 
\begin{align*}
\left. \sum_{j=1}^{2m+1}d_j \cdot T_{\lambda_j}B_m \right|_{\left[ 0, m+1 \right]} \equiv 0
\end{align*}
But
\begin{align*}
T_{\lambda_1}B_m,\ldots,T_{\lambda_{2m+1}}B_m
\end{align*}
is a linear independent system and, moreover, neither of its elements is identically zero on $\left[ 0, m+1 \right]$, so the only possibility is that $d_{j} = 0 \quad \forall j=1, \ldots, 2m+1$.
\end{proof}

\end{document}